\theoremstyle{plain}
\newtheorem{theorem}{Theorem}[section]
\newtheorem*{theorem*}{Theorem}
\newtheorem{lemma}[theorem]{Lemma}
\newtheorem{proposition}[theorem]{Proposition}
\newtheorem{corollary}[theorem]{Corollary}
\newtheorem*{corollary*}{Corollary}
\newtheorem*{conjecture*}{Conjecture}
\theoremstyle{remark}
\newtheorem{remark}[theorem]{Remark}
\theoremstyle{definition}
\newcommand{\Q}{\mathbb{Q}}
\newcommand{\N}{\mathbb{N}}
\newcommand{\dis}{\displaystyle}
\newcommand{\de}{\delta}
\newcommand{\ep}{\epsilon}
\newcommand{\G}{\Gamma}
\newcommand{\D}{\Delta}
\newcommand{\La}{\Lambda}
\newcommand{\Si}{\Sigma}
\newcommand{\I}{^{-1}}
\DeclareMathOperator{\Aut}{Aut}
\DeclareMathOperator{\Out}{Out}
\DeclareMathOperator{\lk}{lk}
\DeclareMathOperator{\st}{st}
\DeclareMathOperator{\dlk}{{\lk}{\downarrow}}
\DeclareMathOperator{\dst}{{\st}{\downarrow}}
\newcommand{\SAut}{\Si\!\Aut}  
\numberwithin{equation}{section}
\begin{document}

\title[Stability for symmetric automorphism groups]{Rational homological stability for groups of symmetric automorphisms of free groups}

\author{Matthew C. B. Zaremsky}


\address{Fakult\"at f\"ur Mathematik \\
Universit\"at Bielefeld \\
Bielefeld, Germany 33615}
\email{zaremsky@math.uni-bielefeld.de}

\begin{abstract}
\singlespacing
Let $F_n$ be the free group of rank $n$, with generating set $S=\{x_1,\dots,x_n\}$. An automorphism $\phi$ of $F_n$ is called symmetric if for each $1\leq i\leq n$, $\phi(x_i)$ is conjugate to $x_j$ or $x_j\I$ for some $1\leq j\leq n$. Let $\SAut(F_n)$ be the group of symmetric automorphisms. We prove that the inclusion $\SAut(F_n)\rightarrow\SAut(F_{n+1})$ induces an isomorphism in rational homology for $n>(3i-1)/2$.
\end{abstract}

\maketitle

\section{Introduction}
\label{sec:intro}

Let $\Aut(F_n)$ be the group of automorphisms of the free group $F_n$. If $S=\{x_1,\dots,x_n\}$ is a fixed basis of $F_n$, an automorphism $\phi$ of $F_n$ is called \emph{symmetric} if for each $1\leq i\leq n$, $\phi(x_i)$ is conjugate to $x_j$ or $x_j\I$ for some $1\leq j\leq n$. If $\phi$ is an automorphism such that each $\phi(x_i)$ is even conjugate to $x_i$ we call $\phi$ \emph{pure symmetric}. Let $\SAut(F_n)$ be the group of symmetric automorphisms of $F_n$ and $P\SAut(F_n)$ the group of pure symmetric automorphisms.

In \cite{hv98} it is shown that the homology of $\Aut(F_n)$ is stable with respect to $n$, and in \cite{galatius11} the stable rational homology is even shown to be trivial. Namely, $H_i(Aut(F_n);\Q)=0$ for all $n>2i+1$ \cite{galatius11}*{Corollary~1.2}. For the pure symmetric case, in \cite{jensen_wahl04} it is shown that the rational homology of $P\SAut(F_n)$ is in fact \emph{not} stable. The symmetric case $\SAut(F_n)$ is the situation of present interest. In \cite{hw10} it is shown that $\SAut(F_n)$ is indeed homologically stable, as a corollary to a much larger project. In particular, the inclusion $\SAut(F_n)\rightarrow\SAut(F_{n+1})$ induces an isomorphism in homology for $n>2i+1$, and a surjection for $n=2i+1$ \cite{hw10}*{Corollary~1.2}. In this note we prove rational homological stability using some simple arguments from combinatorial Morse theory, inspired by methods used in \cite{bm09}.

\begin{theorem}\label{hom_stab_thm}
 The map $H_i(\SAut(F_n);\Q)\rightarrow H_i(\SAut(F_{n+1});\Q)$ induced by inclusion is an isomorphism for $n>(3i-1)/2$.
\end{theorem}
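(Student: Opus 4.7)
My plan is to run the classical equivariant-spectral-sequence argument for homological stability using rational coefficients, with a combinatorial Morse theory input replacing the heavier machinery of \cite{hw10}. Concretely, I would construct a simplicial complex $X_n$ on which $\SAut(F_n)$ acts, verify that the stabilizer of a $p$-simplex is an extension of $\SAut(F_{n-p-1})$ by a finite ``signed permutation'' group, show that $X_n$ is approximately $\tfrac{2}{3}n$-connected by a Morse argument in the spirit of \cite{bm09}, and finally compare the spectral sequences for $\SAut(F_n)$ and $\SAut(F_{n+1})$ via a five-lemma induction on $i$.

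For the complex itself, a natural choice is to take vertices to be $F_n$-conjugacy classes of elements of the form $g x_i^{\pm 1} g^{-1}$ and to declare a $(p+1)$-element collection of such classes to span a $p$-simplex whenever it can be simultaneously realized as a subset of a symmetric basis of $F_n$. The group $\SAut(F_n)$ acts transitively on $p$-simplices, with stabilizer fitting in an exact sequence $1\to\SAut(F_{n-p-1})\to\Stab(\sigma)\to W_p\to 1$ in which $W_p$ is a finite group of signed permutations of the $p+1$ chosen classes. Since rational homology of finite groups vanishes in positive degrees, the Lyndon--Hochschild--Serre spectral sequence collapses and the transfer identifies
\[
H_q(\Stab(\sigma);\Q)\cong H_q(\SAut(F_{n-p-1});\Q)^{W_p}.
\]

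Granted this, the isotropy spectral sequence
\[
E^1_{p,q}=\bigoplus_{\sigma} H_q(\Stab(\sigma);\Q)\Rightarrow H_{p+q}(\SAut(F_n);\Q),
\]
with $\sigma$ ranging over $\SAut(F_n)$-orbit representatives of $p$-simplices, converges in the range bounded by the connectivity of $X_n$. Comparing with the analogous sequence for $\SAut(F_{n+1})$ and inducting on $i$, the theorem follows provided every stabilizer homology $H_q(\SAut(F_{n-p-1});\Q)$ in the relevant strip has already stabilized. This translates into the arithmetic condition $n-p-1>(3q-1)/2$, which, combined with the connectivity bound on $X_n$ on the diagonal $p+q=i$, forces exactly the inequality $n>(3i-1)/2$.

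The main obstacle is proving the required connectivity of $X_n$. My approach would follow the Bestvina--Brady--Bux--Mohr blueprint: equip $X_n$ (or a convenient auxiliary complex) with an $\N$-valued height function and analyze its sublevel sets by discrete Morse theory. The crucial combinatorial lemma should be that at each critical vertex the descending link $\dlk$ deformation retracts onto the join of a simplex with a smaller copy $X_{n'}$, so that an induction on $n$ propagates the connectivity estimate. Pinning down the exact coefficient $\tfrac{2}{3}$ in the connectivity growth---equivalently the $\tfrac{3}{2}$ in the final stability slope---is the delicate combinatorial task at the heart of the argument.
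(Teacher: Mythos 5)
Your proposal takes a genuinely different route from the paper. You are running the Hatcher--Wahl style equivariant spectral-sequence argument: build a complex $X_n$ with $\SAut(F_{n-p-1})$ (up to finite extension) appearing as simplex stabilizers, prove connectivity of $X_n$, and then compare the isotropy spectral sequences for $n$ and $n+1$ by a five-lemma induction. This is essentially the mechanism of \cite{hw10}, which the paper cites as having already established stability (even integrally, and with the same slope). The paper instead uses the Hatcher--Vogtmann quotient-space strategy from \cite{hv98}: it works with Collins' cactus space $\Si K_n$, on which $\SAut(F_n)$ acts with \emph{finite} stabilizers and contractible total space, so that rationally the group homology is simply the homology of the quotient $\Si Q_n$. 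The whole stability question then reduces to showing that the inclusion $\Si Q_{n,c}\hookrightarrow\Si Q_{n+1,c}$ of sublevel quotients is a homotopy equivalence once $n$ is large relative to $c$, which is done by elementary graph-surgery at the basepoint (detecting loops and loop--digon pairs, Lemma~\ref{detect_things_at_p}). The Morse theory of Section~\ref{sec:morse} enters only to show that $\Si K_{n,c}$ is $(c-1)$-connected, so that $\Si Q_{n,c}$ computes $H_i(\SAut(F_n);\Q)$ up to degree $c$. In short: the paper trades the spectral-sequence comparison you propose for the structural fact that the relevant stabilizers are finite; that observation lets one bypass induction on stabilizer subgroups entirely and argue directly about quotient spaces. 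Your route, if carried out, would be stronger in principle (it is not intrinsically rational), but it is also considerably more work.

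Two concrete cautions about your sketch. First, the stabilizer exact sequence $1\to\SAut(F_{n-p-1})\to\Stab(\sigma)\to W_p\to 1$ is not obvious for the complex you describe. If a vertex is a \emph{conjugacy class} $[g x_i^{\pm1}g^{-1}]$, its stabilizer in $\SAut(F_n)$ contains every $\phi$ sending $x_i$ to \emph{any} conjugate of $x_i^{\pm1}$, including inner automorphisms and various partial conjugations, so it is much larger than $\SAut(F_{n-1})$ extended by something finite; you would need either a rigidified complex (e.g.\ honest elements rather than conjugacy classes) or a careful analysis showing the extra piece is rationally invisible. Second, the $\tfrac{2}{3}n$-connectivity estimate and the ensuing slope $\tfrac{3}{2}$ are asserted by reverse-engineering from the theorem statement; in the paper these numbers fall out of a specific geometric count (a reduced cycle at the basepoint with no loop above it forces $b\leq c$, and with no loop or loop--digon it forces $b\leq c/2$), and any version of your argument would need an equally concrete source for that coefficient.
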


J. Griffin and J. Wilson have brought to our attention the independent papers \cite{griffin12} and \cite{wilson11}, in which a much stronger result is proved using an entirely different method, with Theorem~\ref{hom_stab_thm} as a trivial consequence, namely

\begin{theorem}\citelist{\cite{griffin12} \cite{wilson11}}\label{trivial_hlgy}
 $\widetilde{H}_i(\SAut(F_n);\Q)=0$ for all $n$ and $i$.
\end{theorem}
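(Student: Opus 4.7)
My plan is to exploit the split short exact sequence
\begin{equation*}
1 \longrightarrow P\SAut(F_n) \longrightarrow \SAut(F_n) \longrightarrow W_n \longrightarrow 1,
\end{equation*}
in which $W_n \cong (\Z/2)^n \rtimes S_n$ is the hyperoctahedral group of signed permutations of the basis $\{x_1,\dots,x_n\}$, realized inside $\SAut(F_n)$ as the automorphisms $x_i \mapsto x_{\tau(i)}^{\pm 1}$. Because $W_n$ is finite and we are working with $\Q$ coefficients, the Lyndon--Hochschild--Serre spectral sequence for this extension degenerates onto the row $q=0$, producing a canonical isomorphism
\begin{equation*}
H_i(\SAut(F_n);\Q) \;\cong\; H_i(P\SAut(F_n);\Q)_{W_n}.
\end{equation*}
So the theorem reduces to showing that the rational homology of $P\SAut(F_n)$, viewed as a $W_n$-representation, has no coinvariants (equivalently, no invariants) in positive degree.

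The second step is to feed in the explicit computation of $H_*(P\SAut(F_n);\Q)$ due to Jensen--Wahl. Their description presents it as a graded algebra with generators $\alpha_{ij}$ ($i\neq j$) arising from the McCool partial-conjugation automorphisms $x_i \mapsto x_j x_i x_j\I$, subject to explicit quadratic relations. Conjugation inside $\SAut(F_n)$ induces a natural $W_n$-action on this algebra: the $S_n$ factor simply permutes indices via $\sigma\cdot\alpha_{ij}=\alpha_{\sigma(i),\sigma(j)}$, while each coordinate involution in $(\Z/2)^n$ acts by a predictable combination of signs and index swaps, determined by how it conjugates the partial-conjugation generators of $P\SAut(F_n)$.

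The final step is a character-theoretic argument. I would grade the Jensen--Wahl algebra by the multiset of indices appearing in a monomial, observe that this grading is preserved by $(\Z/2)^n$, and then argue that on each positive-degree graded piece there is a coordinate involution acting by $-1$; averaging over $W_n$ then kills every such class, so the coinvariants vanish in positive degree. The hardest part, as I see it, is making this last step rigorous against the Jensen--Wahl relations: because these relations couple monomials with different sign behaviors, one cannot simply read off the $(\Z/2)^n$-characters from a naive monomial basis, and must instead work with a $W_n$-equivariant decomposition (plausibly the partition-combinatorial decomposition implicit in Jensen--Wahl, or the representation-stability filtration used in \cite{wilson11}) for which the $(\Z/2)^n$-characters of each summand are transparent. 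Once that representation-theoretic bookkeeping is in place, the vanishing of $\widetilde H_i(\SAut(F_n);\Q)$ is immediate.
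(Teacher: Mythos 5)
The paper does not supply a proof of this theorem; it is stated purely as a citation to \cite{griffin12} and \cite{wilson11}, and the paper's own contribution is the weaker stability statement of Theorem~\ref{hom_stab_thm}. So there is no internal argument to compare yours against, and I will simply assess your sketch. Your framing is the right one and is broadly the strategy of \cite{wilson11}: use the split extension $1\to P\SAut(F_n)\to\SAut(F_n)\to W_n\to1$ and the rational collapse of the LHS spectral sequence to reduce to vanishing of $W_n$-coinvariants on $H_*(P\SAut(F_n);\Q)$, then feed in the known presentation of this ring by degree-one classes $\alpha_{ij}$. (One attribution issue: that ring presentation, the Brownstein--Lee conjecture, was proved by Jensen--McCammond--Meier; the Jensen--Wahl paper cited here establishes non-stability of $H_*(P\SAut(F_n);\Q)$ but not the cup-product structure.)

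The gap is in the sign-character mechanism you propose. Compute the $(\Z/2)^n$-action honestly: the involution $\epsilon_k\colon x_k\mapsto x_k\I$ conjugates the partial-conjugation generator $\chi_{ij}\colon x_i\mapsto x_j x_i x_j\I$ to $\chi_{ij}\I$ precisely when $k=j$, and fixes $\chi_{ij}$ for all other $k$, including $k=i$. Thus on $H_1$ the character of $\epsilon_k$ on $\alpha_{ij}$ is $-1$ iff $k=j$. Now take $n\geq3$ and pairwise distinct $i,j,k$, and look at $\alpha_{ij}\alpha_{kj}$: every coordinate involution acts by $+1$ on it ($\epsilon_j$ contributes $(-1)(-1)=+1$, all others act trivially). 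So there are nonzero positive-degree classes on which the whole $(\Z/2)^n$ acts trivially, and your claim that each positive-degree graded piece carries a coordinate involution acting by $-1$ is false. (The multiset grading you introduce is also not $\epsilon_k$-isotypic, e.g.\ $\epsilon_1$ acts by $+1$ on $\alpha_{12}\alpha_{32}$ but by $-1$ on $\alpha_{21}\alpha_{23}$, so ``an involution acts by $-1$ on a graded piece'' is not even well-posed.) For such classes, vanishing in the $W_n$-coinvariants can only come from $S_n$-averaging interacting with the Arnold/Brownstein--Lee relations; that is precisely the ``hardest part'' you flag, but the fix you gesture at cannot work as stated, and the needed $S_n$-representation-theoretic argument modulo the relations (which Wilson actually carries out) is missing from your sketch.
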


The present work is thus not a new result, but the method developed here could have further applications to understanding the geometry of these groups. In particular Theorem~\ref{trivial_hlgy} implies that the space $\Si Q_n$ defined in Section~\ref{sec:auterspace} is $\Q$-acyclic, and our method used here strongly suggests that it should even be contractible. Also, the present method has recently proved useful in analyzing the \emph{partially symmetric} case, in \cite{zar12}, which essentially subsumes the present work.

In Section~\ref{sec:auterspace} we provide some background on the spines of Auter space $K_n$ and ``cactus space'' $\Si K_n$, and present a convenient stratification of $\Si K_n$ into sublevel sets $\Si K_{n,c}$. Then in Section~\ref{sec:morse} we introduce a Morse function refining this stratification, and show that each $\Si K_{n,c}$ is $c-1$ connected. Lastly, in Section~\ref{sec:hom_stab} we prove Theorem~\ref{hom_stab_thm}.

\subsection*{Acknowledgements} The author is grateful to Kai-Uwe Bux and Rob McEwen for many helpful conversations, and to James Griffin and Jenny Wilson for alerting him to the results from \cite{griffin12} and \cite{wilson11}.

\section{Auter space and cactus space}\label{sec:auterspace}

We will analyze the homology of $\SAut(F_n)$ by considering its action on a certain topological space. Our starting point is the well-studied \emph{spine of Auter space} $K_n$ introduced in \cite{hv98}. Let $R_n$ be the rose with $n$ edges, i.e., the graph with a single vertex $p_0$ and $n$ edges. Here by a \emph{graph} we mean a connected one-dimensional CW-complex, with the usual notions of vertices and (oriented) edges. We identify $F_n$ with $\pi_1(R_n)$. If $\G$ is a graph with basepoint vertex $p$, a homotopy equivalence $\rho:R_n\rightarrow\G$ is called a \emph{marking} on $\G$ if $\rho$ takes $p_0$ to $p$. We will consider two markings to be equivalent if there is a basepoint-preserving homotopy between them. Also, we only consider graphs that are \emph{reduced}, i.e., $p$ is at least 2-valent, all other vertices are at least 3-valent, and $\G$ has no separating edges; see \citelist{\cite{collins89} \cite{hv98}}. The spine $K_n$ of Auter space is then the space of (reduced) marked basepointed graphs $(\G,p,\rho)$, up to equivalence of markings, such that $\pi_1(\G)=F_n$.

To be more precise, $K_n$ is a simplicial complex, with a vertex for every equivalence class $(\G,p,\rho)$. An $r$-simplex with vertices $\G_0,\G_1,\dots,\G_r$ is a chain of \emph{forest collapses} $\G_r\stackrel{d_r}{\rightarrow}\G_{r-1}\stackrel{d_{r-1}}{\rightarrow}\cdots\stackrel{d_1}{\rightarrow}\G_0$ such that there exist markings $\rho_i:R_n\rightarrow\G_i$ with the following diagram commuting up to homotopy.

$$\xymatrix{
\G_r \ar[r]^{d_r}
& \G_{r-1} \ar[r]^{d_{r-1}}
& \cdots \ar[r]^{d_2} & \G_1 \ar[r]^{d_1} & \G_0 \\
& & R_n \ar[ull]^{\rho_r} \ar[ul]_{\rho_{r-1}} \ar[ur]^{\rho_1} \ar[urr]_{\rho_0}
}$$
Here a \emph{forest collapse} or \emph{blow-down} $d:\G\rightarrow\G'$ is a (basepoint-preserving) homotopy equivalence of graphs that amounts to collapsing a subforest of $\G$. The reverse of a blow-down is, naturally, called a \emph{blow-up}.

Since we are identifying $F_n$ with $\pi_1(R_n)$, we can also identify $\Aut(F_n)$ with the group of basepoint-preserving homotopy equivalences of $R_n$, up to homotopy. This is of course the same as the group of markings of $R_n$, so we can denote markings on $R_n$ by elements of $\Aut(F_n)$. There is a (right) action of $\Aut(F_n)$ on $K_n$ in the following way: given $(\G,p,\rho)\in K_n$ and $\phi\in\Aut(F_n)$, we have $\phi(\G,p,\rho)=(\G,p,\rho\circ\phi)$. In particular this action only affects markings, and it is easy to see that $\Aut(F_n)$ permutes markings arbitrarily.

To analyze the groups $\SAut(F_n)$ we will work with a certain subcomplex (\emph{``cactus space''}) $\Si K_n$ of $K_n$. The vertices of $\Si K_n$ are marked basepointed graphs $(\G,p,\rho)$ such that $\G$ is a \emph{cactus graph} and $\rho$ is a \emph{symmetric marking}. A cactus graph is a graph $\G$ such that every edge is contained in a unique reduced cycle. A marking $\rho$ is called symmetric if for any maximal tree $T$ in $\G$ with $\pi:\G\rightarrow\G/T=R_n$ we have $\pi\circ\rho\in \SAut(F_n)$ (recall our identification of $\Aut(F_n)$ with markings on $R_n$). For brevity we will just define a \emph{marked cactus graph} to be a cactus graph with a symmetric marking. Since any blow-down of a cactus graph is again a cactus graph, any simplex spanned by vertices in $\Si K_n$ is itself in $\Si K_n$, so this really is a full subcomplex of $K_n$. See \cite{collins89} for a more complete discussion of cactus graphs and symmetric markings. Also see \cite{bcv09} for a generalization of this to the partially symmetric case.

The action of $\SAut(F_n)$ on $K_n$ only affects markings, and takes symmetric markings to symmetric markings, so we can consider the action of $\SAut(F_n)$ on $\Si K_n$. Let $\Si Q_n:=\Si K_n/\SAut(F_n)$ be the orbit space. This setup is analyzed in \cite{collins89}, with the following important results.

\begin{proposition}\cite{collins89}*{Proposition~3.5, Theorem~4.7}\label{collins_prop}
 $\SAut(F_n)$ acts on $\Si K_n$ with finite stabilizers and finite quotient $\Si Q_n$, and $\Si K_n$ is contractible.
\end{proposition}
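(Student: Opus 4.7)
The plan is to establish the three claims in sequence --- finite stabilizers, finite quotient, and contractibility of $\Si K_n$ --- with contractibility being the principal difficulty.

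For the first two assertions: the $\SAut(F_n)$-action on $\Si K_n$ affects only the marking, not the underlying graph. The stabilizer of $(\G,p,\rho)$ therefore consists of those $\phi\in\SAut(F_n)$ with $\rho\circ\phi$ basepoint-homotopic to $\rho$; any such $\phi$ is induced by a basepoint-preserving self-homotopy equivalence of $\G$, hence by an element of the basepoint-preserving graph automorphism group of $\G$, which is finite since $\G$ is a finite graph. For the quotient, one checks that $\SAut(F_n)$ acts transitively on the set of symmetric markings of a fixed cactus graph (up to graph automorphism), so vertex-orbits are indexed by isomorphism classes of reduced cactus graphs of rank $n$. Reducedness forces a linear-in-$n$ bound on the number of vertices and edges, leaving only finitely many such graphs, and hence $\Si Q_n$ is finite.

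For contractibility, I would apply combinatorial Morse theory. Define the height of $(\G,p,\rho)$ to be $|E(\G)|$, refined by a tie-breaker so that the height is injective on simplices. For a vertex whose underlying graph is not a rose, the descending link is the order complex of nonempty subforests of $\G$ whose collapse still yields a reduced graph. The key observation is that a subforest of a cactus graph $\G$ is disjoint from the unique reduced cycle containing each surviving edge, so collapsing it preserves the cactus structure, and the pushed-forward marking remains symmetric. Hence the descending link in $\Si K_n$ coincides with that in $K_n$, which is contractible by the standard Hatcher--Vogtmann forest-poset analysis \cite{hv98}. Bestvina--Brady-style Morse theory then gives a homotopy equivalence between $\Si K_n$ and the minimum sublevel set of the height function.

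The hard part is showing this minimum sublevel set is contractible. As a subcomplex of $\Si K_n$ it consists precisely of symmetrically marked roses, and any two distinct marked roses span no $1$-simplex (loops cannot be forest-collapsed), so the set is a priori discrete. The standard remedy is to refine the height function by a well-ordering of symmetric markings so that the absolute minimum becomes a single vertex --- e.g., the rose with identity marking --- while keeping every descending link contractible. Constructing such a well-ordering, compatibly with forest collapses, is the delicate technical step, and once it is in place the argument closes: $\Si K_n$ deformation retracts to a point, proving contractibility.
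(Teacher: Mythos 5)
The paper offers no proof of this proposition; it is cited directly to Collins \cite{collins89}. So your attempt stands or falls on its own merits, and unfortunately it has a genuine mathematical error in addition to the admitted gap.

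The error is in the claim that, with height $|E(\G)|$, the descending link of a non-rose vertex is contractible. With that Morse function, blow-ups always strictly increase $|E|$, so the descending link is exactly the down-link: the poset of nonempty subforests of $\G$, i.e.\ the forest complex $F(\G)$. This is \emph{not} contractible in general. By Vogtmann's result (\cite{vogt90}*{Proposition~2.2}, cited and used in Lemma~\ref{downlink} of this very paper), $F(\G)$ is homotopy equivalent to a wedge of spheres of dimension $V-2$, and for thin cactus graphs it is a single such sphere, not a point. A concrete counterexample: take the rank-$3$ cactus graph with two vertices joined by a digon and a loop at each vertex. Its only collapsible subforests are the two digon edges individually, so $F(\G)\simeq S^0$. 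The phrase ``contractible by the standard Hatcher--Vogtmann forest-poset analysis'' has no backing; what \cite{hv98} actually does is use a refined Morse function (degree, further refined by distance-from-basepoint data) under which blow-\emph{ups} can also be descending, and the descending link becomes the join of a down-link and an up-link. That join can be contractible even when $F(\G)$ alone is not, and the present paper's Section~\ref{sec:morse} carries out exactly such a refinement (with coweight in place of degree). Using raw edge count cannot yield contractible descending links, so Bestvina--Brady Morse theory will not reduce $\Si K_n$ to the sublevel set of roses as you claim.

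Separately, you correctly identify that the minimum sublevel set --- the discrete collection of marked roses --- is the real obstacle, but you leave the ``delicate technical step'' of producing a suitable well-ordering entirely unaddressed. That is the content of the theorem, not a refinement of bookkeeping. Note also that making one rose the absolute minimum forces the descending links of the \emph{other} roses to live purely in the up-link, which your framework says nothing about. In short: the descending-link contractibility claim is false as stated, and the contractibility of the bottom stratum is asserted rather than proved, so the argument for the main clause of the proposition does not go through. (Your arguments for finite stabilizers and finite quotient are fine and standard.)
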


It is also clear that if an element of $\SAut(F_n)$ stabilizes a simplex then it fixes it pointwise, since the vertices of any simplex correspond to pairwise non-isomorphic graphs. The upshot of this that $\Si Q_n$ and $\SAut(F_n)$ have the same rational homology; see for example Exercise~2 on page 174 in \cite{brown82}.

It is difficult to analyze $\Si Q_n$ directly, and so we will work with a certain stratification, similar to the one used in \cite{hv98} and \cite{bm09} for $Q_n$. For a cactus graph $\G$ with basepoint $p$, define the \emph{weight} $b(\G)$ to be the number of reduced cycles in $\G$ that contain the basepoint $p$ as a vertex. Also define the \emph{coweight} $c(\G)$ to be $n-b(\G)$, i.e., the number of reduced cycles that do not contain $p$. Obviously $0\leq c(\G)\leq n-1$. For $c\in\N_0$ let $\Si K_{n,c}$ be the subspace of $\Si K_n$ spanned by marked basepointed graphs of coweight at most $c$. Note that $\Si K_{n,0}$ is the discrete set of marked roses, and $\Si K_{n,c}=\Si K_n$ if and only if $c\geq n-1$.

\begin{remark}
 Our definition of coweight is related to the \emph{degree} of a graph, used in \cite{hv98} and \cite{bm09}. One can check that coweight is really just half the degree, and so is essentially the same measurement, but in the cactus setting the notion of coweight is more natural.
\end{remark}

\section{Morse theory}\label{sec:morse}

We now define a height function $h$ on $\Si K_n$. This height function is related to the one defined in \cite{bm09} on the space $K_n$, though is adjusted to work nicely with cactus graphs. Let $(\G,p,\rho)$ be a basepointed marked cactus graph. For vertices $v,v'$ in $\G$, define the distance $d(v,v')$ to be the number of reduced cycles in $\G$ that share an edge with a minimal-length path from $v$ to $v'$. Since $\G$ is a cactus graph, it is easy to see that this is well-defined. Let $\La_i(\G):=\{v\in\G\mid d(p,v)=i\}$, so for example $\La_0(\G)=\{p\}$. For each $i\geq0$ define $n_i(\G):=-|\La_i(\G)|$. Note that for a given reduced cycle $C$, there is a unique vertex $v$ of $C$ with minimal distance to $p$. Call this vertex the \emph{base} of $C$. Define $c_i(\G)$ to be the number of reduced cycles in $\G$ whose base is not in $\La_i$. This is a sort of ``local coweight,'' and in particular when $i=0$ we recover the coweight, $c=c_0$. Define $h$ to be
$$h(\G)=(c_0(\G),n_1(\G),c_1(\G),n_2(\G),c_2(\G)\dots)$$
with the lexicographic order. Note that we should technically write $h(\G,p,\rho)$, but since $h$ is independent of $\rho$, and $p$ is understood, we will usually just write $h(\G)$. This is a refinement of $c$, and we will use $h$ to analyze the connectedness of $\Si K_{n,c}$.

For any vertex $(\G,p,\rho)$ define $\Si K_n^{\leq\G}$ to be the subcomplex of $\Si K_n$ having vertices $(\G',p',\rho')$ with $h(\G')\leq h(\G)$. We have natural notions of the \emph{descending star} and \emph{descending link} with respect to $h$. For a vertex $\G$ in $\Si K_n$, the descending star $\dst(\G)$ with respect to $h$ is the open star of $\G$ in $\Si K_n^{\leq\G}$, that is the set of simplices in $\Si K_n^{\leq\G}$ that contain $\G$. The descending link $\dlk(\G)$ is the set of faces of simplices in $\dst(\G)$ that do not themselves contain $\G$. The descending link can be described as the join of the descending blow-down complex, or \emph{down-link}, and the descending blow-up complex, or \emph{up-link}; see \cite{bm09}. Here we say that a blow-down or blow-up is descending if the resulting graph has a lower height than the starting graph. Of course in the up-link we restrict to blow-ups that result in cactus graphs.

For an edge $e$ in $\G$ with vertices $v$ and $v'$, we call $e$ \emph{horizontal} if $d(p,v)=d(p,v')$. Otherwise we call $e$ \emph{vertical}. If a graph $\G$ has the property that every reduced cycle has length at most 2, we will call $\G$ \emph{thin}. Otherwise we call $\G$ non-thin. Note that a graph is thin if and only if every horizontal edge is a loop. For a forest $F$ in $\G$, let $D(F)$ be the smallest $i$ such that $F$ has a vertex in $\La_i$.

\begin{lemma}\label{proper_morse}
 If $F$ connects two vertices in $\La_{D(F)}$ then $h(\G/F)>h(\G)$. If $F$ does not connect any vertices in $\La_{D(F)}$, then $h(\G/F)<h(\G)$.
\end{lemma}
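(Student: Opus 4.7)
Set $d = D(F)$; the plan is to compare $h(\G)$ and $h(\G/F)$ entry by entry in the lex order. First, I would argue that the initial segment $(c_0, n_1, c_1, \dots, n_{d-1}, c_{d-1})$ of $h$ is unchanged. Since $D(F) = d$, no vertex of $F$ sits at level $< d$, and the cactus block-path from $p$ to any vertex at level $< d$ uses only cycles whose vertices all sit at levels $\leq d - 1$; collapsing $F$ cannot touch these cycles, and no collapsed component can provide a shortcut since every component lies at level $\geq d$. Thus $\La_i(\G/F) = \La_i(\G)$ for $i < d$, and every cycle with base at level $< d$ retains its base (same vertex, same level) after the collapse, so $n_i, c_i$ are preserved for $i<d$.

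The main structural input is the claim that no vertex $w \notin V(F)$ originally at level $\ell > d$ can have $\G/F$-level equal to $d$. For such a drop to occur, each of the cycles $C_d, C_{d+1}, \dots, C_{\ell-1}$ on the $p$-to-$w$ path must become a loop in $\G/F$; but a cycle becomes a loop only when all its vertices coalesce into a single $F$-component. Propagating along the chain (using that consecutive cycles $C_i, C_{i+1}$ share their linking cut vertex), one concludes that $w$ itself must lie in that component, contradicting $w \notin V(F)$. The same analysis, applied to the $F$-components themselves, shows that each component $T_j$ with minimum level $d$ contributes exactly one vertex $[T_j]$ to $\La_d(\G/F)$. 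Writing $r_j = |V(T_j) \cap \La_d(\G)|$, this gives the counting identity
\[|\La_d(\G)| - |\La_d(\G/F)| \;=\; \sum_{j\,:\, r_j\geq 1} (r_j - 1).\]

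Under the first hypothesis some $r_j \geq 2$, so the right-hand side is strictly positive, $n_d$ strictly increases, and $h(\G/F) > h(\G)$. Under the second hypothesis every $r_j \leq 1$, the right side vanishes and $n_d$ is unchanged; I would then check that $c_d$ strictly decreases. After deleting isolated vertices from $F$ (which do not affect $\G/F$) every component has an edge; choose one, say $T$, with $\mathrm{lev}(T) = d$. Since $r_T = 1$ while $|T| \geq 2$, there is some $u \in T$ with level $> d$. Reducedness forces $u$ to have valence $\geq 3$; since $u$ contributes only $2$ to its valence from its parent cycle, it must be the base of some child cycle $C$. In $\G/F$ the base of $\bar{C}$ is $[T]$, which sits at level $d$ (all other vertices of $\bar{C}$ have strictly larger $\G/F$-level by the same ``no drops'' argument applied to a single step). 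Thus $\bar{C}$ enters the count of cycles with base in $\La_d$, so $c_d$ strictly decreases and $h(\G/F) < h(\G)$.

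The hardest step is the ``no drops to level $d$'' claim in the second paragraph: it underpins both the counting identity for the first case and the base-drop computation for the second, and it requires carefully managing the interaction between the components of $F$ and the cactus block structure along a single path.
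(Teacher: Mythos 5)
Your proof is correct and follows the same strategy as the paper's: compare $h(\Gamma)$ and $h(\Gamma/F)$ lexicographically, show the entries before index $D(F)$ are unchanged, then in the first case observe $|\Lambda_{D(F)}|$ strictly drops (so $n_{D(F)}$ rises), and in the second case use reducedness to produce a cycle whose base drops into $\Lambda_{D(F)}$ (so $c_{D(F)}$ falls). The paper's proof is a terse four-sentence version that simply asserts these facts; your write-up supplies the supporting details (the ``no level drop'' claim and the counting identity) that the paper leaves implicit.
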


\begin{proof}
 First suppose $F$ connects two vertices in $\La_{D(F)}$. Then $n_{D(F)}(\G/F)>n_{D(F)}(\G)$, and none of the $n_i$ or $c_i$ change for $i<D(F)$, so $h(\G/F)>h(\G)$. Now suppose $F$ does not connect any vertices in $\La_{D(F)}$. Then $n_{D(F)}(\G/F)=n_{D(F)}(\G)$, and again none of the $n_i$ or $c_i$ change for $i<D(F)$. However, since $\G$ is reduced, blowing down $F$ must increase the number of reduced cycles with base in $\La_{D(F)}$. Thus $c_{D(F)}(\G/F)<c_{D(F)}(\G)$, and so $h(\G/F)<h(\G)$.
\end{proof}

This tells us that adjacent vertices in $\Si K_n$ have different heights, and so descending stars of vertices with the same height are disjoint. It also implies that a single edge is a descending forest if and only if it is vertical. We now analyze the descending links of vertices in $\Si K_n$.

\begin{lemma}\label{downlink}
 If $\G$ is non-thin, the down-link of $\G$ is contractible. If $\G$ is thin, the down-link of $\G$ is homotopy equivalent to a $V-2$ sphere, where $V$ is the number of vertices of $\G$.
\end{lemma}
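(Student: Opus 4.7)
My plan is to reduce the analysis of the down-link to a forest-complex computation on the vertical subgraph $\G_v \subseteq \G$, the subgraph on the same vertex set retaining only the vertical edges of $\G$. The reduction hinges on the following structural claim: every non-empty subforest of $\G_v$ is already a descending forest in $\G$. I would derive this from Lemma~\ref{proper_morse} using a cactus-specific fact: any vertex $w$ at level $j+1$ has at most one vertical neighbor at level $j$, namely the base of the unique cycle in which $w$ is a non-base vertex (and only when $w$ is an ``end'' non-base of that cycle). This forces each connected component of a subforest of $\G_v$ to have a unique vertex at its minimum level, so no two vertices at level $D(F)$ lie in the same component, which is precisely the descending criterion of Lemma~\ref{proper_morse}.

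Given the claim, I would apply the Nerve Lemma to the cover of the down-link by the upper stars $U_e = \{F \text{ descending} : e \in F\}$, as $e$ ranges over the vertical edges of $\G$. Every descending forest must contain a vertical edge (a purely horizontal forest would connect two endpoints of its lowest-level horizontal edge, violating descendingness), so these $U_e$ cover the down-link. Each $U_e$ has $\{e\}$ as its minimum and is conically contractible; every non-empty finite intersection $U_{e_1} \cap \cdots \cap U_{e_k}$ similarly has $\{e_1, \dots, e_k\}$ as its minimum, which by the preceding step is automatically a descending forest whenever it is a forest. The Nerve Lemma then identifies the down-link, up to homotopy, with the forest complex of $\G_v$.

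Finally, I would decompose the forest complex of $\G_v$ as a join indexed by the reduced cycles of $\G$. Each digon contributes its two parallel vertical edges, giving an $S^0$ factor (they cannot coexist in a forest); each cycle of length $\geq 3$ contributes its two non-parallel vertical edges, giving a contractible $\Delta^1$ factor (they may coexist freely). The decomposition is valid because the only simple cycles in $\G_v$ are digons of $\G$---in a cactus every simple cycle is a reduced cycle, and a reduced cycle made entirely of vertical edges must have length $2$. In the thin case there are exactly $V-1$ digons (one per non-basepoint vertex) and no longer cycles, so the join reduces to $V-1$ copies of $S^0$, yielding $S^{V-2}$. In the non-thin case at least one $\Delta^1$ factor appears, and the join with any contractible space is itself contractible.

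The main obstacle is the structural claim of the first paragraph. The cactus-specific observation about vertical neighbors is intuitive but rests on the fact that each vertex lies as a non-base of at most one cycle, which itself follows from the block--cut tree of the cactus being a tree. Once this is in hand, the Nerve Lemma reduction and the join decomposition are essentially routine combinatorics.
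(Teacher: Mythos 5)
Your argument is correct, and it takes a genuinely different route from the paper's. The paper splits into cases: for non-thin $\G$ it picks a long cycle $C$ with a vertical edge $\ep$ and a horizontal edge $\eta$, and performs two successive poset retractions ($F\mapsto F\setminus\eta$, then $F\mapsto F\cup\ep$) to cone off the down-link; for thin $\G$ it observes that \emph{every} subforest is descending, so the down-link is the full forest complex $F(\G)$, which it then decomposes as a join over the reduced cycles and identifies as $S^{V-2}$ (with an appeal to Vogtmann's wedge-of-spheres result to pin down the homotopy type). Your proof instead handles both cases at once: you first prove the key structural fact that every subforest of the vertical subgraph $\G_v$ is automatically descending (via the cactus property that each non-basepoint vertex has a unique vertical neighbor at the level below), and that every descending forest contains at least one vertical edge. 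Then the Nerve Lemma applied to the cover of the descending-forest poset by the up-sets $U_e$ (for $e$ vertical) identifies the down-link with the forest complex of $\G_v$, and the join decomposition of $F(\G_v)$ over the reduced cycles of $\G$---$S^0$ per digon, a contractible $\Delta^1$ per cycle of length $\geq3$, nothing per loop---gives the stated answer directly. This is more uniform, avoids the two ad hoc retractions of the non-thin case, and sidesteps the external citation in the thin case since the join of $V-1$ copies of $S^0$ is literally $S^{V-2}$. The price is the extra machinery of the Nerve Lemma and the need to verify the structural claim about $\G_v$, which you correctly flag as the key point; that claim does hold, for exactly the reason you give.
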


\begin{proof}
 The down-link of $\G$ is realized by the poset $P(\G)$ of descending forests in $\G$. First suppose $\G$ is non-thin. Let $C$ be a reduced cycle of length at least 3. Then $C$ has precisely two vertical edges and at least one horizontal edge. Let $\ep$ be a vertical edge and $\eta$ a horizontal edge. Then the forest consisting just of $\ep$ is a descending forest, and the forest consisting of $\eta$ is not. So, apply the map $F\mapsto F\setminus\eta$ to $P(\G)$. This is clearly a poset retraction onto the poset $P_0(\G)$ of descending forests that do not contain $\eta$. Since $F\setminus\eta\subseteq F$ for all $F$, by \cite{quillen78}*{1.3} this induces a homotopy equivalence between $P(\G)$ and $P_0(\G)$. Now apply the map $F\mapsto F\cup\ep$ to $P_0(\G)$. Since $F$ does not contain $\eta$, $F\cup\ep$ is a forest, and is still descending since $\ep$ is. This then induces a homotopy equivalence between $P_0(\G)$ and the star of $\ep$, which is visibly contractible, with cone point $\ep$.

 Now suppose $\G$ is thin. Then every subforest of $\G$ is a descending subforest, and so $P(\G)$ is identical to $F(\G)$, the complex of all forests in $\G$. Moreover, since every edge of $\G$ is contained in a unique reduced cycle, we have that $F(\G)$ is simply the join of the $F(C)$, as $C$ ranges over every reduced cycle. But clearly $F(C)$ is either empty, if $C$ is a loop, or is $S^0$, if $C$ has two edges. Thus $F(\G)$ is homotopy equivalent to a sphere. By \cite{vogt90}*{Proposition~2.2}, $F(\G)$ is homotopy equivalent to a wedge of spheres of dimension $V-2$, and so it actually must be a single sphere of dimension $V-2$.
\end{proof}

For non-thin $\G$, we now know that $\dlk(\G)$ is contractible, as the join of the contractible down-link with the up-link. Now suppose $\G$ is thin, so the down-link is $S^{V-2}$, and consider the up-link. For any vertex $v\neq p$, there is a unique reduced cycle $C_v$ containing $v$ such that $v$ is not the base of $C_v$. If a blow-up $B$ at a vertex $v\neq p$ separates the two half-edges of $C_v$ incident to $v$, we call $B$ \emph{separating} at $v$. Let $\Si BU(v)$ denote the poset of separating blow-ups at $v$. Let $b(v)$ denote the number of reduced cycles whose base is $v$ (the ``weight at $v$'').

\begin{lemma}\label{local_blowups_conn}
 For $v\neq p$, $\Si SBU(v)\simeq S^{b(v)-2}$.
\end{lemma}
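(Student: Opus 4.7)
The plan is to retract $\Si SBU(v)$ onto a sub-poset of ``extremal'' blow-ups that can be identified with the face poset of the boundary of the permutohedron $P_{b(v)}$ on $b(v)$ letters, whose order complex is homeomorphic to $S^{b(v)-2}$.

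First I would analyze a general separating blow-up $B$ at $v$: such a $B$ replaces $v$ by a tree $T_v$ on which the $b(v)+1$ petals at $v$ (the base petals $B_1,\dots,B_{b(v)}$ and the petal $C_v$) have their half-edges distributed. The cactus condition on the blown-up graph forces the paths in $T_v$ of the \emph{split} petals (those with two half-edges at distinct vertices of $T_v$) to form an edge-partition of $T_v$; the separating condition at $v$ forces $C_v$ itself to be split; and reducedness forces every vertex of $T_v$ to carry enough petal-ends to have valence $\geq 3$. Since the two half-edges of $C_v$ are distinguishable (they are the two distinct edges of the bigon $C_v$ in the thin graph $\G$), the two endpoints of $C_v$'s path in $T_v$ are canonically ordered.

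Next I would single out the sub-poset $E\subseteq\Si SBU(v)$ of \emph{extremal} blow-ups, those in which $C_v$ is the only split petal. In such a blow-up, $T_v$ coincides with $C_v$'s path and the $b(v)$ base petals are distributed with at least one petal per vertex of $T_v$; this data is precisely an ordered set partition of $\{B_1,\dots,B_{b(v)}\}$ into $\geq 2$ non-empty blocks. The refinement order on blow-ups matches the refinement order on ordered partitions, so $E$ is isomorphic as a poset to the face poset of $\partial P_{b(v)}$, and its order complex is the barycentric subdivision of $\partial P_{b(v)}\simeq S^{b(v)-2}$.

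Finally I would introduce the \emph{extremalization} map $f\colon \Si SBU(v)\to E$ sending $B$ to the blow-up obtained by collapsing every edge of $T_v$ not lying on $C_v$'s path. Every such collapsed edge is covered by some non-$C_v$ split petal's path, and collapsing these paths un-splits the corresponding petals while preserving the cactus, reduced, and separating conditions (the last because $C_v$'s path is edge-disjoint from the collapsed subforest). I would then check that $f$ is order-preserving, satisfies $f(B)\leq B$, and is idempotent, so by \cite{quillen78}*{1.3} the inclusion $E\hookrightarrow\Si SBU(v)$ is a homotopy equivalence and the lemma follows. The main technical point will be verifying order-preservation across refinements, i.e., that if $B'$ refines $B$ then $C_v$'s path in the tree of $B'$ refines $C_v$'s path in the tree of $B$, so that $f(B)\leq f(B')$ in $E$.
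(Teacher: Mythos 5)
Your proof is correct and takes essentially the same route as the paper: both perform a Quillen poset retraction from $\Si SBU(v)$ onto the subposet of blow-ups that split \emph{only} $C_v$, and then identify that core with a model of $S^{b(v)-2}$. The difference is one of bookkeeping. The paper works in the Culler--Vogtmann formalism, encoding a blow-up as a simplex of pairwise-compatible ideal-edge partitions; the retraction there is ``forget the non-separating partitions in the simplex,'' and the resulting core $S_0$ is recognized as the barycentric subdivision of $\partial\Delta^{b(v)-1}$. You work directly with the poset of blow-up trees $T_v$; the retraction is ``collapse all edges of $T_v$ off of $C_v$'s path,'' and you recognize the core $E$ as the face poset of $\partial P_{b(v)}$. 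These are the same object viewed two ways, since the face poset of the barycentric subdivision of $\partial\Delta^{n-1}$ is precisely the face poset of $\partial P_n$ (ordered set partitions $\leftrightarrow$ chains of proper nonempty subsets), and either route gives $S^{b(v)-2}$. Your argument does buy a slightly more geometric picture --- the permutohedron makes the extremal blow-ups and the refinement order vivid --- and your explicit verification that collapsing the off-$C_v$ subforest preserves the cactus, reduced, and separating conditions fills in a point the paper leaves implicit in the partition language.
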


\begin{proof}
 As in Section~2 of \cite{bm09}, we will use the combinatorial framework for blow-ups described in \cite{cv86}, as the poset of compatible partitions. Label the half-edges incident to $v$ that are not in $C_v$ with $a_1,\bar{a}_1,\dots,a_{b(v)},\bar{a}_{b(v)}$. Also label the half-edges incident to $v$ that are in $C_v$ with $a_0$ and $\bar{a}_0$. Do this so that each $a_i$ shares a reduced cycle precisely with $\bar{a}_i$. We now consider partitions of $\{a_0,\bar{a}_0,a_1,\bar{a}_1,\dots,a_{b(v)},\bar{a}_{b(v)}\}$ into two blocks $\{a,\bar{a}\}$ such that the size of each block is at least two. Since we only consider cactus blow-ups, we only consider partitions such that for precisely one $0\leq i\leq b(v)$, $a_i$ and $\bar{a}_i$ do not share a block. The partition $\{a,\bar{a}\}$ is separating at $v$ if and only if $a_0$ and $\bar{a}_0$ do not share a block. We say two partitions $\{a,\bar{a}\}$ and $\{b,\bar{b}\}$ are compatible if either $a\subseteq b$ or $b\subseteq a$. Let $S$ denote the simplicial complex of partitions, where the vertices are the partitions and each collection of $r+1$ pairwise compatible partitions spans an $r$-simplex. Let $S_0$ be the subcomplex spanned by separating partitions, and $S_1$ the star of $S_0$ in $S$. Then $S_1$ is clearly homotopy equivalent to $\Si SBU(v)$, and thus so is $S_0$. But $S_0$ is the surface of the barycentric subdivision of a $(b(v)-1)$-simplex with vertices labeled $1,\dots,b(v)$, so indeed $S_0\simeq S^{b(v)-2}$.
\end{proof}

\begin{corollary}\label{global_blowups_conn}
 Let $\dis \Si SBU(\G):=\ast_{v\in\G-p}\Si SBU(v)$. Let $c$ be the coweight of $\G$ and $V$ the number of vertices. Then $\Si SBU(\G)\simeq S^{c-V}$.
\end{corollary}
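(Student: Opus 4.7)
The plan is to apply Lemma \ref{local_blowups_conn} directly to each factor of the join and then use the standard formula that $S^a \ast S^b \simeq S^{a+b+1}$. Since each non-basepoint vertex $v$ contributes $\Si SBU(v) \simeq S^{b(v)-2}$, iterating the join formula over the $V-1$ non-basepoint vertices gives
$$\dim \Si SBU(\G) = \sum_{v \neq p}(b(v)-2) + (V-1) - 1 = \Bigl(\sum_{v \neq p} b(v)\Bigr) - 2(V-1) + V - 2.$$

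The second step is to identify $\sum_{v \neq p} b(v)$ with the coweight $c$. By definition $b(v)$ counts the reduced cycles of $\G$ whose base is $v$, and every reduced cycle has a unique base. Thus summing over all $v \neq p$ counts exactly the reduced cycles whose base is not $p$, which is the same as the reduced cycles not containing $p$, namely $c$. Substituting yields $c - 2(V-1) + V - 2 = c - V$, which is the claimed dimension.

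Before concluding, one should check that the join formula is being applied to honest spheres, i.e.\ that no factor $\Si SBU(v)$ is pathological. The potential worry is $b(v) = 0$, which would produce $S^{-2}$. However, reducedness of $\G$ forces every $v \neq p$ to have valence at least $3$, and since $v$ has valence $2 + 2b(v)$ (contributing $2$ from the unique cycle $C_v$ containing but not based at $v$, and $2$ for each of the $b(v)$ cycles based at $v$), we get $b(v) \geq 1$. So each factor is at worst $S^{-1} = \emptyset$, and the join formula $S^a \ast S^b \simeq S^{a+b+1}$ extends consistently to this case via $X \ast \emptyset = X$. The main (very mild) obstacle is simply bookkeeping the $-2$ contributions from each local factor against the $+1$ contributions from each join operation and pairing them up cleanly with the identity $\sum_{v \neq p} b(v) = c$; no deeper input is required.
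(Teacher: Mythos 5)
Your argument is correct and matches the paper's proof: both apply Lemma~\ref{local_blowups_conn} factor by factor, use the join-of-spheres formula $S^a\ast S^b\simeq S^{a+b+1}$ over the $V-1$ non-basepoint vertices, and close with the identity $\sum_{v\neq p}b(v)=c$ to arrive at dimension $c-V$. The extra sanity check that reducedness forces $b(v)\geq 1$ (so no factor degenerates below $S^{-1}=\emptyset$) is a reasonable addition the paper leaves implicit.
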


\begin{proof}
 We know by Lemma~\ref{local_blowups_conn} that
 $$\Si SBU(\G) \simeq \ast_{v\in\G-p}S^{b(v)-2},$$
 which is homotopic to a sphere of dimension $\dis ((V-2)+\sum_{v\neq p}(b(v)-2))$. But $\dis (V-2)+\sum_{v\neq p}(b(v)-2)=(v-2)+c-2(V-1)=c-V$.
\end{proof}

\begin{lemma}\label{uplink_conn}
 For thin $\G$ having $V$ vertices and coweight $c$, the up-link of $(\G,p,\rho)$ in $\Si K_n$ is homotopy equivalent to $S^{c-V}$.
\end{lemma}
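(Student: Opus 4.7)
The plan is to identify the up-link of $(\G,p,\rho)$ with the complex $\Si SBU(\G)$ appearing in Corollary~\ref{global_blowups_conn}, which immediately gives the claimed homotopy type $S^{c-V}$. Since cactus blow-ups act locally and blow-ups at distinct vertices commute, the up-link should decompose as a join, over the vertices $v$ of $\G$, of the local descending up-links. This reduces matters to two statements: no cactus blow-up at $p$ is descending, and at any $v\neq p$ the descending cactus blow-ups are precisely the separating ones, so that the local factor is $\Si SBU(v)$.

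To verify this, I would consider a single-edge blow-up $\G\to\G'$ at $v$, creating new vertices $v_1,v_2$ joined by a new edge $e$, and apply Lemma~\ref{proper_morse} in reverse to the blow-down $\G'\to\G$ that collapses $e$. That lemma shows the blow-up is descending if and only if $v_1$ and $v_2$ sit in the same level $\La_{D(\{e\})}(\G')$. Now any cactus blow-up splits exactly one cycle $C$ through $v$, so there are two cases. In the separating case at $v\neq p$, meaning $C=C_v$, the old bigon $C_v$ becomes a triangle in $\G'$ and both $v_1,v_2$ inherit level $d(p,v)$, so the blow-up is descending. In the non-separating case, $C_v$ is untouched with both its half-edges at (say) $v_1$, so $v_1$ stays at level $d(p,v)$ while the new reduced cycle through $e$ (a bigon if $C$ is a loop, a triangle if $C$ is a bigon based at $v$) pushes $v_2$ to level $d(p,v)+1$, and the blow-up is ascending. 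For $v=p$ there is no $C_p$, so every cactus blow-up at $p$ falls into the non-separating analysis and ends up ascending.

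For a higher-dimensional simplex in the up-link, compatibility of blow-ups lets us split it into its single-vertex contributions, and being descending at every intermediate graph along the chain is equivalent to being descending at each vertex separately. Thus the up-link equals $\Si SBU(\G)$, and Corollary~\ref{global_blowups_conn} then finishes the proof. The main obstacle I anticipate is the careful bookkeeping of how the levels $\La_i$ and the distance function $d$ reorganize after a blow-up, but the thinness hypothesis on $\G$ ensures that every new reduced cycle has length at most $3$ and is localized at the blown-up vertex, which makes the recomputation of distances routine.
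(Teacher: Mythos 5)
Your analysis of single-edge blow-ups is correct, and your conclusion agrees with the paper's, but the claim that ``the up-link equals $\Si SBU(\G)$'' is a genuine gap, and it is precisely the point where the paper has to do real work. The height function $h$ is a \emph{lexicographic} refinement of coweight, so whether a multi-vertex blow-up is descending is governed only by what happens at the \emph{lowest} level touched: by Lemma~\ref{proper_morse}, the blow-up $\G\to\G'$ is descending if and only if the collapsed forest $F\subseteq\G'$ joins two vertices in $\La_{D(F)}(\G')$. In particular, a tuple $(f_v)$ whose component at some vertex $v$ in level $D$ is separating, but whose component at a vertex $w$ in a strictly higher level is \emph{non}-separating, still gives a descending blow-up. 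Such a tuple lies in the up-link but not in $\ast_{v\neq p}\Si SBU(v)$. For a concrete instance, take $\G$ thin with $p$ joined by a bigon to $v$, a bigon from $v$ to $w$, two loops at $v$, and three loops at $w$; blow up separatingly at $v$ and non-separatingly at $w$, and check with Lemma~\ref{proper_morse} that the result is descending. So ``descending at every intermediate graph along the chain'' is \emph{not} equivalent to ``descending at each vertex separately,'' and the up-link is strictly larger than $\Si SBU(\G)$.

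The paper addresses exactly this: it identifies the up-link with the poset
\[
X=\Bigl\{f\in\prod_{v\neq p}\underline{\Si BU}(v)\ \Bigm|\ \exists\, v\in\La_{D(f)}\text{ with } f_v\in\Si SBU(v)\Bigr\},
\]
which encodes the lexicographic condition, and then produces a poset retraction $r\colon X\to\Si SBU(\G)$ (forgetting all non-separating components) with $r(f)\leq f$; Quillen's lemma \cite{quillen78}*{1.3} then yields the homotopy equivalence $X\simeq\Si SBU(\G)$, after which Corollary~\ref{global_blowups_conn} applies. Your proposal would be salvageable by replacing the equality with this retraction step, so the missing ingredient is the Quillen-style deformation of the up-link onto its ``fully separating'' subposet rather than an identity between them. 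Your local computations of levels after a blow-up, and the observation that blow-ups at $p$ are always ascending, are correct and match what the paper tacitly uses.
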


The proof is essentially the same as the proof of Lemma~2.5 in \cite{bm09}.

\begin{proof}
 We claim that the up-link is homotopy equivalent to $\Si SBU(\G)$. For a poset $P$, let $\underline{P}$ be $P\sqcup\{\perp\}$, where $\perp$ is a formal minimal element. Then we have that $P\ast Q\simeq \underline{P}\times\underline{Q} - \{(\perp,\perp)\}$. Let
 $$X:=\{f\in\prod_{v\neq p}\underline{\Si BU}(v)\mid \exists v\in\La_{D(f)} \textnormal{ with } f_v\in\Si SBU(v)\}.$$
 Here $f_v$ is a blow-up at $v$ in the tuple $f$, and $D(f)$ denotes the smallest $i$ such that $f_v\neq\perp$ for some $v\in\La_i$. Define a map $r:X\rightarrow\Si SBU(\G)$ via
 \begin{align*}
  (f_v)_{v\neq p}\mapsto\left(\left\{
  \begin{matrix}
   f_v & \textnormal{ for } & f_v\in\Si SBU(v)\\
   \perp & \textnormal{ for } & f_v\not\in\Si SBU(v)
  \end{matrix}
  \right.\right)_{v\neq p}
 \end{align*}
 This is just a restriction of the map $r$ used in the proof of \cite{bm09}*{Lemma~2.5}, and it is clearly a poset retraction onto $\Si SBU(\G)$. Also, $r(f)\leq f$ for all $f\in X$, and so by \cite{quillen78}*{1.3} this induces a homotopy equivalence between the geometric realization of $X$ and $\Si SBU(\G)$. But the geometric realization of $X$ is the up-link of $\G$, and so by Lemma~\ref{global_blowups_conn}, the up-link is homotopy equivalent to $S^{c-V}$.
\end{proof}

\begin{proposition}\label{dlk_conn}
 For any cactus graph $\G$ with $c=c(\G)$, either $\dlk(\G)$ is contractible or $\dlk(\G)\simeq S^{c-1}$.
\end{proposition}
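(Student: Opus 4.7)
The plan is to deduce the proposition by combining the computation of the down-link in Lemma~\ref{downlink} with the computation of the up-link in Lemma~\ref{uplink_conn}, using the fact recalled before Lemma~\ref{downlink} that $\dlk(\G)$ decomposes as the join of the down-link with the up-link. Throughout I adopt the standard conventions $S^{-1}=\emptyset$ and $S^a*S^b\simeq S^{a+b+1}$, with $X*\emptyset\simeq X$ covering the degenerate cases.

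First I would dispose of the non-thin case immediately: Lemma~\ref{downlink} asserts that the down-link is (nonempty and) contractible, and the join of any space with a nonempty contractible space is contractible, so $\dlk(\G)$ is contractible. For the thin case, Lemma~\ref{downlink} gives down-link $\simeq S^{V-2}$ and Lemma~\ref{uplink_conn} gives up-link $\simeq S^{c-V}$, so
\[
\dlk(\G)\simeq S^{V-2}*S^{c-V}\simeq S^{(V-2)+(c-V)+1}=S^{c-1},
\]
as desired.

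The main thing I would have to check carefully is that the boundary cases are consistent, since either exponent $V-2$ or $c-V$ can equal $-1$. The reducedness hypothesis (vertices $v\neq p$ are at least 3-valent) together with the fact that such a $v$ lies in $C_v$ forces $b(v)\geq 1$ for every $v\neq p$, so $c=\sum_{v\neq p}b(v)\geq V-1$ and thus $c-V\geq -1$ always. The genuinely extremal situations are the rose ($V=1$, $c=0$, where both factors are empty and $\dlk(\G)=\emptyset=S^{-1}=S^{c-1}$) and the case $c=V-1$ (up-link empty, $\dlk(\G)\simeq S^{V-2}=S^{c-1}$); both are compatible with the unified formula, so no separate case split is needed in the final write-up. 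The proof is therefore essentially a one-line join computation with a brief bookkeeping remark on exponents.
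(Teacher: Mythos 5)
Your argument is exactly the paper's: the descending link is the join of the down-link and the up-link, Lemma~\ref{downlink} handles the non-thin case (contractible down-link) and gives $S^{V-2}$ in the thin case, Lemma~\ref{uplink_conn} gives $S^{c-V}$, and the join formula yields $S^{c-1}$. Your extra bookkeeping verifying $c\geq V-1$ and checking the degenerate sphere conventions is a welcome addition the paper leaves implicit, but it is not a different approach.
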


\begin{proof}
 If $\G$ is non-thin, $\dlk(\G)$ is contractible. If $\G$ is thin, the down-link is homotopy equivalent to $S^{V-2}$ and the up-link is homotopy equivalent to $S^{c-V}$. This implies that the descending link is homotopy equivalent to $S^{c-1}$.
\end{proof}

Having shown that the descending links are highly connected, basic Morse theory tells us that the sublevel sets are also highly connected, since $\Si K_n$ is contractible. Also see the Theorem in \cite{bm09}.

\begin{corollary}\label{auter_cac_space_conn}
 $\Si K_{n,c}$ is $c-1$ connected.
\end{corollary}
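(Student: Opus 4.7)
The plan is to run the standard discrete Morse argument on the height function $h$, using $\Si K_n$ as the total space and exhausting it outward from $\Si K_{n,c}$ by attaching descending stars of vertices in order of increasing $h$-value. By Lemma~\ref{proper_morse}, any two adjacent vertices of $\Si K_n$ have distinct heights, so the descending stars of equal-height vertices are disjoint and we really may attach them one at a time. For each vertex $\G$ with $h(\G)$ above the current level, writing $\Si K_n^{<\G}$ for the subcomplex of vertices $\G'$ with $h(\G')<h(\G)$, we have
\[
\Si K_n^{\leq\G}=\Si K_n^{<\G}\cup\dst(\G),\qquad \Si K_n^{<\G}\cap\dst(\G)=\dlk(\G),
\]
and $\dst(\G)$ is a cone on $\dlk(\G)$. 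Hence passing from $\Si K_n^{<\G}$ to $\Si K_n^{\leq\G}$ is homotopically the same as coning off $\dlk(\G)$.

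Now I would invoke Proposition~\ref{dlk_conn}: for every vertex $\G$ with $c(\G)\geq c+1$ (i.e.\ every vertex outside $\Si K_{n,c}$), the descending link is either contractible or a sphere of dimension $c(\G)-1\geq c$. In the contractible case the attachment does not change the homotopy type, and in the spherical case the attachment is (up to homotopy) the gluing of a cell of dimension $c(\G)\geq c+1$. Attachments of cells of dimension $\geq c+1$ do not affect $\pi_i$ for $i\leq c-1$, so the inclusion $\Si K_{n,c}\hookrightarrow\Si K_n$ induces an isomorphism on $\pi_i$ for $i\leq c-1$. Since $\Si K_n$ is contractible by Proposition~\ref{collins_prop}, this gives $\pi_i(\Si K_{n,c})=0$ for $i\leq c-1$, which is the desired $(c-1)$-connectedness.

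The only real subtlety is that $\Si K_n$ has infinitely many vertices at and above any given height, so the ``filtration'' is transfinite rather than a finite sequence of cell attachments. I would handle this by noting that any continuous map $S^i\to\Si K_n$ has compact image and hence lands in a finite subcomplex; applying the cell-attachment argument only to the finitely many vertices of coweight $>c$ met by this subcomplex reduces the map into $\Si K_{n,c}$, and likewise for nullhomotopies. This is the same device used in \cite{bm09}*{Theorem}, to which I would simply refer. I expect the main obstacle is not the argument itself but keeping the bookkeeping honest: verifying that adjacent vertices of $\Si K_n$ really do always differ in height (which is exactly Lemma~\ref{proper_morse}) so that the descending-star decomposition of each intermediate sublevel set is valid, and that every vertex added after $\Si K_{n,c}$ truly has $c(\G)\geq c+1$, which is immediate from the definition of the stratification.
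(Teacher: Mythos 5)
Your proof is correct and uses the same Morse-theoretic idea as the paper: exhaust $\Si K_n$ from $\Si K_{n,c}$ by attaching descending stars along descending links, invoke Proposition~\ref{dlk_conn} to see those links are either contractible or spheres of dimension $\geq c$, and conclude from the contractibility of $\Si K_n$. The paper packages this as a downward induction on $c$ (from $\Si K_{n,c+1}$ to $\Si K_{n,c}$, one coweight level at a time) while you run a single exhaustion over all vertices of coweight $>c$ in height order and treat the compactness/transfinite issue explicitly, but these are presentational variations on the same argument.
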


\begin{proof}
 First note that $\Si K_{n,n-1}=\Si K_n$ is contractible, and thus $n-2$ connected. Assuming $\Si K_{n,c+1}$ is $c$-connected, we need to show that $\Si K_{n,c}$ is $c-1$ connected. But $\Si K_{n,c+1}$ is obtained from $\Si K_{n,c}$ by gluing on the descending stars of graphs with coweight $c+1$ along their descending links, and all these descending links are $c-1$ connected by Proposition~\ref{dlk_conn}, so this follows immediately.
\end{proof}

\section{Homological stability}\label{sec:hom_stab}

Everything in this section is basically just the version of Section~5 in \cite{hv98} for the symmetric case, and no major changes are necessary. The only real difference is that we have used coweight as our coarse stratification, rather than degree, but since $\Si K_{n,c}$ is $c-1$ connected, the numbers all work out exactly as in \cite{hv98}.

Consider the action of $\SAut(F_n)$ on $\Si K_n$. For each $c$, $\Si K_{n,c}$ is clearly stabilized by $\SAut(F_n)$, so it makes sense to define $\Si Q_{n,c}:=\Si K_{n,c}/\SAut(F_n)$. As explained in \cite{hv98}, since $\Si K_n$ is contractible and each $\Si K_{n,c}$ is $c-1$ connected, it is easy to see that $\Si Q_{n,c}$ has the same rational homology as $\SAut(F_n)$ in dimensions less than $c$; see for example Exercise~2 on page 174 of \cite{brown82}. To be precise, we have the following

\begin{lemma}\label{gp_to_space_hlgy}
 $H_i(\Si Q_{n,c};\Q)\cong H_i(\SAut(F_n);\Q)$ for all $i<c$. Also, $H_c(\Si Q_{n,c};\Q)$ surjects onto $H_c(\SAut(F_n);\Q)$.\qed
\end{lemma}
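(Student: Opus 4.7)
The plan is to deduce both statements from the standard equivariant homology machinery for a group acting on a complex, exactly along the lines of Brown's Exercise 2 on page 174. The two key inputs are already in hand: by Proposition~\ref{collins_prop} the stabilizers of the $\SAut(F_n)$-action are finite (and fix their simplices pointwise, as observed in Section~\ref{sec:auterspace}), and by Corollary~\ref{auter_cac_space_conn} the subcomplex $\Si K_{n,c}$ is $(c-1)$-connected.

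First I would form the Borel construction
$$B:=E\SAut(F_n)\times_{\SAut(F_n)}\Si K_{n,c}$$
and consider the two natural maps it carries. On the one hand, projection onto the second coordinate gives a map $B\to\Si Q_{n,c}$ whose homotopy fibers over a simplex $\sigma$ are classifying spaces of the (finite) stabilizer $\Stab(\sigma)$. The Leray--Serre (or equivariant) spectral sequence for this map has $E_2$-page
$$E_2^{p,q}=H_p(\Si Q_{n,c};\mathcal{H}_q(\Stab(-);\Q)).$$
Because finite groups have trivial rational homology in positive degrees and the stabilizers act trivially on their simplices, the local coefficient system is constant $\Q$ in $q=0$ and zero otherwise, so the spectral sequence collapses to an isomorphism
$$H_i(B;\Q)\cong H_i(\Si Q_{n,c};\Q)\qquad\text{for all }i.$$

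Second, projection onto the first coordinate gives the Borel fibration
$$\Si K_{n,c}\longrightarrow B\longrightarrow B\SAut(F_n),$$
whose Serre spectral sequence has $E_2^{p,q}=H_p(\SAut(F_n);H_q(\Si K_{n,c};\Q))$. Since $\Si K_{n,c}$ is $(c-1)$-connected, $H_q(\Si K_{n,c};\Q)=0$ for $1\leq q\leq c-1$, so the only nontrivial row in the range $p+q\leq c-1$ is $q=0$, giving the edge-map isomorphism
$$H_i(\SAut(F_n);\Q)\cong H_i(B;\Q)\qquad\text{for }i<c,$$
together with a surjection $H_c(B;\Q)\twoheadrightarrow H_c(\SAut(F_n);\Q)$ (since the only possibly-nonzero differential landing in the $(c,0)$ position on $E_2$ comes from $E_2^{0,c-1}$, which is zero, while the quotient $E_\infty^{c,0}$ is a subgroup of $H_c(B;\Q)$).

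Combining the two isomorphisms, and the isomorphism with the surjection, yields the two claims of the lemma. The argument is entirely formal once contractibility of $\Si K_n$, connectivity of $\Si K_{n,c}$, and finiteness of simplex-stabilizers are in hand; there is no real obstacle, merely the bookkeeping of assembling the two spectral sequences in the correct order.
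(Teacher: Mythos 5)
Your argument is correct and is precisely the standard equivariant spectral-sequence proof that the paper itself invokes (the lemma is stated with only a citation to \cite{brown82}*{p.~174, Ex.~2} and to \cite{hv98}, not a written-out proof), using exactly the two inputs the paper has prepared: finite simplex-stabilizers that fix simplices pointwise, and the $(c-1)$-connectivity of $\Si K_{n,c}$. One small bookkeeping slip in your parenthetical: in the homological Serre spectral sequence the differentials $d_r\colon E_r^{p,q}\to E_r^{p-r,q+r-1}$ \emph{leave} $(c,0)$ rather than land there, and $E_\infty^{c,0}$ is a \emph{quotient} of $H_c(B;\Q)$ (while a subgroup of $E_2^{c,0}$); these are transpositions of the cohomological conventions and do not affect the conclusion, since the relevant targets $(c-r,r-1)$ for $2\le r\le c$ all lie in the vanishing range $1\le q\le c-1$.
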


To get homological stability for $\SAut(F_n)$ we can now look for homological stability of $\Si Q_{n,c}$. Note that the vertices of $\Si Q_n$ are just the homeomorphism types of basepointed graphs, since $\SAut(F_n)$ changes the markings arbitrarily. Embed $\Si K_{n,c}$ into $\Si K_{n+1,c}$ by sending $(\G,p,\rho)$ to $(\G\vee S^1,p,\rho')$. Here $\rho'$ is $\rho$ extended to $\G\vee S^1$ by sending the new generator to the new loop. This is the same embedding as described in \cite{hv98} for the $K_n$ case. This induces an embedding $\iota:\Si Q_{n,c}\hookrightarrow\Si Q_{n+1,c}$.

We now describe a way to ``detect'' the presence of certain subgraphs at the basepoint. If $\G$ has rank $n+1$ and has a loop at the basepoint $p$ then $\G$ is in the image of $\iota$, so we want to be able to detect loops. We also want to be able to detect a \emph{loop-digon pair}. This is a subgraph $\de$ consisting of three edges and two vertices, with two of the edges forming a digon and the third edge forming a loop at one vertex. If the other vertex is $p$, we call $\de$ a loop-digon pair \emph{at} $p$. For reduced cycles $C,C'$ we will say that $C'$ is \emph{above} $C$ if the base of $C'$ is a vertex of $C$ that is not the base of $C$.

\begin{lemma}\label{detect_things_at_p}
 Let $(\G,p)$ be a basepointed cactus graph with weight $b$, coweight $c$, and rank $n$. If $2c<n$ then $\G$ has a loop at $p$. If $c<2n/3$ then either $\G$ has a loop or a loop-digon pair at $p$.
\end{lemma}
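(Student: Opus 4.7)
The plan is to prove the contrapositives: if $\G$ has no loop at $p$, then $c\geq n/2$; if $\G$ additionally has no loop-digon pair at $p$, then $c\geq 2n/3$. The key structural input is reducedness. In a cactus graph every vertex has even valence, since each reduced cycle through $v$ contributes exactly $2$ to the valence of $v$, so the requirement ``valence $\geq 3$ at non-basepoint vertices'' upgrades to valence $\geq 4$. Hence every $v\neq p$ lies on at least two reduced cycles. Exactly one of these is the parent cycle of $v$, the unique cycle through $v$ whose base is strictly closer to $p$ than $v$, so $v$ must be the base of at least one cycle; that is, $b(v)\geq 1$. Summing, $c=\sum_{v\neq p}b(v)\geq V-1$, where $V$ is the number of vertices of $\G$.

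Combining this with the cactus identity $V-1=\sum_C(\ell_C-1)$ (each reduced cycle of length $\ell_C$ attaches at a single existing vertex and adds $\ell_C-1$ new ones) gives the master inequality $c\geq\sum_C(\ell_C-1)$. For the first assertion, the no-loop-at-$p$ hypothesis forces $\ell_C\geq 2$ for each of the $b$ cycles at $p$, contributing at least $b$ to the sum, and cycles not at $p$ contribute nonnegatively. Hence $c\geq b$, so $n=b+c\leq 2c$.

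For the second assertion I refine the bound. Write $b=D+B$, where $D$ counts the digons at $p$ and $B$ counts the cycles at $p$ of length at least $3$; then the cycles at $p$ contribute at least $D+2B=b+B$ to $\sum_C(\ell_C-1)$. Partition the remaining cycles as $c=c^L+c^{NL}$, loops versus non-loops not at $p$; non-loops contribute at least $c^{NL}$ to the sum, while loops contribute zero. This yields $c\geq(b+B)+c^{NL}$, which rearranges to $c^L\geq D+2B$. To bound $c^{NL}$ from below, observe that the $D$ vertices $v_1,\dots,v_D$ opposite $p$ in the digons at $p$ are pairwise distinct, since distinct cycles in a cactus share at most one vertex. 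Each $v_i$ satisfies $b(v_i)\geq 1$ by the reducedness bound, and the no-loop-digon-pair hypothesis forces every cycle based at $v_i$ to be a non-loop, so each such cycle contributes to $c^{NL}$. Hence $c^{NL}\geq\sum_{i=1}^{D}b(v_i)\geq D$. Adding, $c=c^L+c^{NL}\geq (D+2B)+D=2b$, which rearranges to $c\geq 2n/3$.

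The main obstacle is the combinatorial accounting in the second case: one must simultaneously track how each cycle contributes to the vertex count and to $c$, and apply the no-loop-digon-pair hypothesis precisely at the digon-opposite vertices, where the generic bound $b(v)\geq 1$ is tight but where the hypothesis converts the $b(v_i)$ contributions into non-loops, providing the additional $c^{NL}\geq D$ needed to push from $c\geq n/2$ up to $c\geq 2n/3$.
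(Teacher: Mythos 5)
Your argument is correct, and it is in fact more careful than the paper's own, which contains a small gap that your approach repairs. The paper also argues via the ``above'' relation: since in a reduced cactus every non-basepoint vertex has valence at least $4$ (valences are even), every such vertex is the base of at least one cycle, so with no loops at $p$ every cycle based at $p$ has at least one cycle above it, whence $b\leq c$. For the second bound the paper asserts that with no loop-digon pairs, every cycle based at $p$ has at least \emph{two} cycles above it. That claim can fail: if $C$ is a digon at $p$ whose opposite vertex $v$ is the base of exactly one cycle $C'$, and $C'$ is a non-loop (say another digon), then $C$ has just one cycle above it yet there is no loop-digon pair at $p$. The inequality $c\geq 2b$ is nonetheless true, and your bookkeeping via the master inequality $c=\sum_{v\neq p}b(v)\geq V-1=\sum_C(\ell_C-1)$ proves it cleanly: the non-loops forced at the digon-opposite vertices $v_i$ supply precisely the ``second'' cycle per digon at $p$ that the paper's over-hasty claim was reaching for, while the longer cycles at $p$ and the remaining non-loop cycles contribute the rest. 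A slightly shorter repair in the paper's own language is to root the forest of cycles at the $b$ cycles based at $p$ and observe that each tree contains at least three cycles (a digon at $p$ with a unique child $C'$ has $C'$ a non-loop, which in turn has a child), so $b+c\geq 3b$. Either way, your proof is correct and, unlike the paper's, airtight as stated.
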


\begin{proof}
 First suppose $\G$ has no loops at $p$. Then every reduced cycle with base $p$ has at least one reduced cycle above it, so $b\leq c$. But $n=b+c$, so $n\leq 2c$. Now suppose $\G$ has no loops or loop-digon pairs at $p$, so every reduced cycle with base $p$ has at least two reduced cycles above it. Then $b\leq c/2$, and since $n=b+c$ we conclude that $n\leq 3c/2$.
\end{proof}

\begin{remark}
 In \cite{hv98}, a similar lemma detects loops and theta subgraphs at $p$. Cactus graphs have no theta subgraphs, but in the cactus case these loop-digon pairs turn out to yield essentially the same result.
\end{remark}

\begin{proposition}\label{orbit_spaces_stable}
 The map $\iota:\Si Q_{n,c}\hookrightarrow\Si Q_{n+1,c}$ is a homeomorphism for $2c<n+1$ and a homotopy equivalence for $3c/2<n+1$.
\end{proposition}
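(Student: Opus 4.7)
The plan is to split the argument by parameter range and follow the template of \cite{hv98}*{Section~5}, with loop-digon pairs playing the role that theta subgraphs play there.

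\textbf{Homeomorphism range $2c<n+1$.} The map $\iota$ is simplicial and injective on vertices by construction, so it suffices to prove surjectivity on vertices and on simplices. Given any vertex $(\G,p)\in\Si Q_{n+1,c}$, Lemma~\ref{detect_things_at_p} applied with rank $n+1$ supplies a loop $\ell$ at $p$ in $\G$. Detaching $\ell$ yields a cactus graph $\G'$ of rank $n$, and since $\ell$ is a reduced cycle through $p$ it contributes only to the weight, so $c(\G')=c(\G)\le c$ and $\iota(\G',p)=(\G,p)$. A simplex of $\Si Q_{n+1,c}$ is a chain of forest collapses, and since forest collapses never involve a loop, a loop at $p$ persists throughout the chain. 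One may therefore lift any such chain to $\Si Q_{n,c}$ by consistently stripping a loop at $p$ from each vertex, making $\iota$ a simplicial isomorphism.

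\textbf{Homotopy equivalence range $3c/2<n+1$.} Write $L:=\iota(\Si Q_{n,c})$, the subcomplex of $\Si Q_{n+1,c}$ whose vertices carry a loop at $p$. For any vertex $\G\notin L$, Lemma~\ref{detect_things_at_p} supplies a loop-digon pair at $p$: a digon $\{e_1,e_2\}$ joining $p$ to some vertex $v$, together with a loop $\ell$ at $v$. Collapsing $e_1$ is a forest collapse that identifies $v$ with $p$ and turns both $e_2$ and $\ell$ into loops at $p$. The resulting graph $r(\G)$ lies in $L$, has coweight $c-1$, and is joined to $\G$ by the $1$-simplex of the collapse. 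This is exactly the cactus analogue of the theta-collapse move used in \cite{hv98}, as promised by the remark after Lemma~\ref{detect_things_at_p}.

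\textbf{Main obstacle.} The remaining work is promoting the local rule $\G\mapsto r(\G)$ to a genuine deformation retraction of $\Si Q_{n+1,c}$ onto $L$. Following \cite{hv98}*{Section~5}, I would verify coherence: for any blow-down $\G\to\G''$ with $\G\notin L$, either $\G''\in L$ already (no further choice required), or the chosen loop-digon pair at $p$ in $\G$ is disjoint from the collapsed forest and persists in $\G''$, so can be used as the chosen pair there. This makes the local retractions compatible across adjacent simplices and assembles them into a deformation retract, yielding the claimed homotopy equivalence. The bookkeeping is essentially identical to the theta-subgraph case, which is precisely the sense in which ``no major changes are necessary.''
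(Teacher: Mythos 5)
Your argument for the homeomorphism range $2c<n+1$ is fine and is, if anything, more detailed than the paper's one-line treatment (the paper just invokes Lemma~\ref{detect_things_at_p} directly).

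For the homotopy-equivalence range your strategy has a real gap, and the paper actually takes a structurally different route. You define a local rule $\G\mapsto r(\G)$ by collapsing one digon edge of a \emph{chosen} loop-digon pair and propose to promote it to a deformation retraction by checking a ``coherence'' condition along blow-downs. But this rule is not a poset map. Consider a vertex $\G\notin L$ with chosen pair $\de$, and a blow-down $\G\to\G''$ by a forest $F$ disjoint from $\de$ such that $\G''\in L$ (this happens, for instance, if $F$ collapses a digon edge of a \emph{different} loop-digon pair). Then $r(\G'')=\G''=\G/F$ and $r(\G)=\G/\{e\}$ with $e\in\de$, $e\notin F$; these are blow-downs of $\G$ by disjoint forests and hence incomparable, so $\G''\le\G$ fails to imply $r(\G'')\le r(\G)$. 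There is also a genuine choice ambiguity: a vertex $\G''$ with several loop-digon pairs may sit below several higher vertices, each dictating a different ``inherited'' pair, and your coherence check does not reconcile these. Finally, new loop-digon pairs at $p$ can be \emph{created} by blow-downs elsewhere in the graph, so there is no canonical way to make the chosen pair persist along a chain.

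The paper avoids all of this by never making a choice: it collects \emph{all} loop-digon pairs at $p$ into a subgraph $\D$ and writes $\G=\D\vee\G'$. The open star of $\G$ in $\Si Q_{n+1,c}$ then decomposes as a product of the open star of $\D$ in $\Si Q_{2r,r}$ with the open star of $\G'$ in $\Si Q_{n+1-2r,c-r}$. The crucial observation is that the $\D$-factor is a \emph{single simplex}: $\D$ already has maximal coweight $r$, so only blow-downs appear, and in the quotient by $\SAut(F_{2r})$ all non-loop edges of $\D$ are identified by graph automorphisms, leaving only the chain ``collapse $0,1,\dots,r$ digons.'' Retracting that simplex to its bottom vertex pushes the whole star of $\G$ into the image of $\iota$, with no consistency condition to verify. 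That wedge/product decomposition plus the single-simplex fact is the content that is missing from your sketch; it is also where the remark that ``loop-digon pairs turn out to yield essentially the same result'' as theta subgraphs is actually cashed in.
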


The proof very closely mirrors the proof of Proposition~5.4 in \cite{hv98}.

\begin{proof}
 If $2c<n+1$ then every $\G$ in $\Si Q_{n+1,c}$ has a loop at $p$, so $\iota$ is a homeomorphism. Now suppose $3c/2<n+1$, and let $\G$ be a vertex not in the image of $\iota$. Then $\G$ has no loops at $p$ but does have at least one loop-digon pair. Let $\D$ be the subgraph of $\G$ consisting of all loop-digon pairs at $p$, say there are $r\geq 1$ of them. Then $\G=\D\vee\G'$, for some $\G'$ with rank $n+1-2r$. Now, the open star of $\G$ in $Q_{n+1,c}$ is the product of open stars of $\D$ in $Q_{2r,r}$ and $\G'$ in $Q_{n+1-2r,c-r}$. The former consists of a single simplex, since all non-loop edges in $\D$ are equivalent under automorphisms of $\D$; moreover, every other vertex of this star has lower coweight since blowing down any edge reduces $c$ by 1. So, collapsing any non-loop edge of $\D$ gives a deformation retraction of the star of $\G$ into the image of $\iota$.
\end{proof}

Since $\iota$ is natural with respect to $\SAut(F_n)\hookrightarrow \SAut(F_{n+1})$, we can now prove our main result.

\begin{proof}[Proof of Theorem~\ref{hom_stab_thm}]
 We know that when $3c/2<n+1$,
 $$H_i(\SAut(F_n);\Q)\rightarrow H_i(\SAut(F_{n+1});\Q)$$
 is an isomorphism for all $i<c$, by Lemma~\ref{gp_to_space_hlgy} and Proposition~\ref{orbit_spaces_stable}. If $n>(3i-1)/2$ then we can take $c=i+1$ and get that $3c/2=3(i+1)/2<n+1$, and so the result follows.
\end{proof}

\begin{bibdiv}
\begin{biblist}

\bib{brown82}{book}{
  author={Brown, K. S.},
  title={Cohomology of Groups},
  publisher={Springer-Verlag},
  series={Graduate Texts in Mathematics},
  volume={87},
  address={New York},
  date={1982},
}

\bib{bm09}{article}{
  author={Bux, K.-U.},
  author={McEwen, R.},
  title={A combinatorial substitute for the degree theorem in auter space},
  note={arXiv:0907.4642},
}

\bib{bcv09}{article}{
  author={Bux, K.-U.},
  author={Charney, R.},
  author={Vogtmann, K.},
  title={Automorphisms of two-dimensional RAAGS and partially symmetric automorphisms of free groups},
  journal={Groups, Geometry and Dynamics},
  volume={3},
  pages={525-539},
  date={2009},
}

\bib{collins89}{article}{
  author={Collins, D. J.},
  title={Cohomological dimension and symmetric automorphisms of a free group},
  journal={Comment. Math. Helvetici},
  volume={64},
  number={1},
  pages={44-61},
  date={1989},
}

\bib{cv86}{article}{
  author={Culler, M.},
  author={Vogtmann, K.},
  title={Moduli of graphs and automorphisms of free groups},
  journal={Invent. Math.},
  volume={84},
  number={1},
  pages={91-119},
  date={1986},
}

\bib{galatius11}{article}{
  author={Galatius, S.},
  title={Stable homology for automorphism groups of free groups},
  journal={Ann. of Math.},
  volume={173},
  number={2},
  date={2011},
  pages={705-768},
}

\bib{griffin12}{article}{
  author={J. Griffin},
  title={Diagonal complexes and the integral homology of the automorphism group of a free product},
  note={arXiv:1011.6038},
}

\bib{hv98}{article}{
  author={Hatcher, A.},
  author={Vogtmann, K.},
  title={Cerf theory for graphs},
  journal={J. London Math. Soc.},
  pages={663-655},
  date={1998},
}

\bib{hw10}{article}{
  author={Hatcher, A.},
  author={Wahl, N.},
  title={Stabilization for mapping class groups of 3-manifolds},
  journal={Duke Math. J.},
  volume={155},
  number={2},
  pages={205-269},
  date={2010},
}

\bib{jensen_wahl04}{article}{
  author={Jensen, C. A.},
  author={Wahl, N.},
  title={Automorphisms of free groups with boundaries},
  journal={Algebr. Geom. Topol.},
  volume={4},
  pages={543-569},
  date={2004},
}

\bib{quillen78}{article}{
  author={Quillen, D.},
  title={Homotopy Properties of the Poset of Nontrivial p-Subgroups of a Group},
  journal={Advances in Mathematics},
  volume={28},
  date={1978},
  pages={101 - 128},
}

\bib{vogt90}{article}{
  author={Vogtmann, K.},
  title={Local structure of some $\Out(F_n)$-complexes},
  journal={Proc. Edinburgh Math Society},
  date={1990},
}

\bib{wilson11}{article}{
  author={Wilson, J. C. H.},
  title={Representation stability for the cohomology of the pure string motion groups},
  note={arxiv:1108.1255},
}

\bib{zar12}{article}{
  author={Zaremsky, M. C. B.},
  title={Rational homological stability for groups of partially symmetric automorphisms of free groups},
  note={arxiv:1203.4845},
}

\end{biblist}
\end{bibdiv}

\end{document}